\tikzstyle{vertex}=[circle, draw, inner sep=0pt, minimum size=6pt]
\newenvironment{claimproof}[1]{{\it\noindent{Proof.}}\space#1}{\footnotesize \hfill \ensuremath{(\square)} \medskip}
\newcommand{\qed}{\hfill ~$\square$\bigskip}
\newtheorem{thm}{Theorem}[section]
\newtheorem{prop}[thm]{Proposition}
\newtheorem{cor}[thm]{Corollary}
\newtheorem{lemma}[thm]{Lemma}
\newtheorem{claim}{Claim}
\newtheorem{prob}{Problem}
\def\NN{\hbox{\sf I\kern-.13em\hbox{N}}}
\def\RR{\hbox{\sf I\kern-.14em\hbox{R}}}
\def\ZZ{\hbox{\sf I\kern-.14em\hbox{Z}}}
\begin{document}

\title{On the dissociation number of Kneser graphs}

\author{
Bo\v{s}tjan Bre\v{s}ar$^{a,b}$\thanks{bostjan.bresar@um.si}
 \\
Tanja Dravec$^{a,b}$\thanks{tanja.dravec@um.si}
\\
}


\maketitle

\begin{center}
$^a$ Faculty of Natural Sciences and Mathematics, University of Maribor, Slovenia\\
$^b$ Institute of Mathematics, Physics and Mechanics, Ljubljana, Slovenia\\

\end{center}

\begin{abstract}
A set $D$ of vertices of a graph $G$ is a dissociation set if each vertex of $D$ has at most one neighbor in $D$. The dissociation number of $G$, $diss(G)$, is the cardinality of a maximum dissociation set in a graph $G$. In this paper we study dissociation in the well-known class of Kneser graphs $K_{n,k}$. In particular, we establish that the dissociation number of Kneser graphs $K_{n,2}$ equals $\max{\{n-1,6\}}$. We show that for any $k \geq 2$, there exists $n_0 \in \mathbb{N}$ such that $diss(K_{n,k})=\alpha(K_{n,k})$ for any $n \geq n_0$. We consider the case $k=3$ in more details and prove that $n_0=8$ in this case. Then we improve a trivial upper bound $2\alpha(K_{n,k})$ for the dissociation number of Kneser graphs $K_{n,k}$ by using Katona's cyclic arrangement of integers from $\{1,\ldots , n\}$. Finally we investigate the odd graphs, that is, the Kneser graphs with $n=2k+1$. We prove that $diss(K_{2k+1,k})={2k \choose k}$.    

\end{abstract}

\noindent
{\bf Keywords:} dissociation set, $k$-path vertex cover, Kneser graph, odd graphs, independence number  \\

\noindent
{\bf AMS Subj.\ Class.\ (2010)}: 05C69 

\section{Introduction}

The {\em Kneser graph}, $K_{n,k}$, where $n,k$ are positive integers such that $n\ge 2k$,  has the $k$-subsets of an $n$-set as its vertices, and two $k$-subsets are adjacent in $K_{n,k}$ if they are disjoint. The Erd\H{o}s--Ko--Rado theorem~\cite{ekr-61} determined the independence number $\alpha(K_{n,k})$ of the Kneser graph $K_{n,k}$ to be equal to ${{n-1}\choose{k-1}}$. Another famous result is Lov\'{a}sz's proof of Kneser's conjecture, which determines the chromatic number of Kneser graphs~\cite{lov-78}, see also Matou\v{s}ek for a combinatorial proof of this result~\cite{mat-04}. Many other invariants were later considered in Kneser graphs by a number of authors. The diameter of a Kneser graph $K_{n,k}$ was computed in~\cite{vpv-05} and the hamiltonicity was researched in~\cite{Chen2003,Si2004}. The domination number of Kneser graphs was also studied in several papers~\cite{hw-2003,iz-1993,osx-2015}, but there is no such complete solution for domination number of Kneser graphs as is the case with the chromatic and the independence number. Recently, the $P_3$-hull number of Kneser graphs was completely resolved for all Kneser graphs $K_{n,k}$ with the sole exception of odd graphs, that is, when $n=2k+1$; see~\cite{grip-21}.  The problem of independence in graphs can be rephrased as the search for a (largest) induced subgraph in which all components have only one vertex. 
In this paper, we extend the study to search for a largest induced subgraph of a Kneser graph in which all components have at most two vertices.  

A set $D$ of vertices in a graph $G$ is called a {\em dissociation set} if the subgraph induced by vertices of $D$ has maximum degree at most 1.  The cardinality of a maximum dissociation set $D$ in a graph $G$ is called the {\em dissociation number} of $G$, and is denoted by $diss(G)$. The dissociation number was introduced by Papadimitriou and Yannakakis~\cite{py-82} in relation with the complexity of the so-called restricted spanning tree problem. A dual concept to dissociation set can be generalized to {\em $m$-path vertex cover}, which was introduced in~\cite{bkk-11} and studied in several papers~\cite{bs-16,bjk-13,jt-13}; it is defined as a set $S$ of vertices in $G$ such that $G-S$ does not contain any path $P_m$. The corresponding invariant, the {\em $m$-path vertex cover number} of an arbitrary graph $G$, is denoted by $\psi_m(G)$. 
Note that dissociation sets are complements of $3$-path vertex covers of $G$, and so $diss(G)=|V(G)|-\psi_3(G)$. The decision version of the $m$-path vertex cover number is NP-complete~\cite{bkk-11}, moreover, in the case $m=3$ it is NP-complete even in bipartite graphs which are $C_4$-free and have maximum degree 3~\cite{bcl-04}; see also~\cite{odf-11} for further strengthening of this result and~\cite{kks-11} for an approximation algorithm. Some variations of the problem were already studied as well (see e.g. \cite{bkss,liyawa-17}). We mention in passing that graphs in which all maximal dissociation sets are of the same size were studied in~\cite{bhr-17}.

The independence number of a graph $G$, $\alpha(G)$, can be defined as the order of the largest induced subgraph of $G$ with maximum degree 0. If 0 in this definition is replaced by 1, then we get a definition of the dissociation number of a graphs. Since any independent set of a graph $G$ is also a dissociation set of $G$, the independence number of $G$ is a lower bound for the dissociation number of $G$. In addition, one can easily get the upper bound for the dissociation number of $G$ as a function of $\alpha(G)$. Let $S$ be a dissociation set of $G$ and $A \subseteq S$ a maximum independent subset of $S$. Then every vertex of $S \setminus A$ has exactly one neighbor in $A$ and any vertex of $A$ has at most one neighbor in $S \setminus A$. Therefore $|S \setminus A| \leq |A|$. Hence we immediately get the following bounds for the dissociation number of $G$:
$$\alpha(G) \leq diss(G) \leq 2\alpha(G).$$

The paper is organized as follows. In Section~\ref{sec:dissociation}, we first present the exact result for the dissociation number of Kneser graphs $K_{n,2}$. Then we prove that for any $k \geq 2$ there exists $n_0 \in \NN$ such that $diss(K_{n,k})=\alpha(K_{n,k})$ for any $n \geq n_0$. Also, we find $n_0$ for $k=3$; we prove that $diss(K_{n,3})=\alpha(K_{n,3})$ if and only if $n \geq 8$. In Section~\ref{sec:bound}, we use Katona's cyclic arrangement of integers from his proof of Erd\H{o}s-Ko-Rado theorem~\cite{kat-72} to improve the upper bound $2 \alpha(G)$ for the dissociation number for the case when $G$ is a Kneser graph.
In Section~\ref{sec:odd}, we show that the dissociation number of odd graphs $O_k$ (Kneser graphs $K_{2k+1,k}$) equals ${2k \choose k}$ for $k\geq 2.$ 

In the rest of this section we present the notation used throughout the paper and some basic results concerning the dissociation number of a graph. 

Let $[n] = \{1,2,\ldots,n\}$, where $n\in \NN$. For a graph $G=(V,E)$ and $S \subseteq V(G)$ we write $G[S]$ for the subgraph of $G$ induced by $S$ and $G -S$ for the subgraph of $G$ induced by the set $V(G) \setminus S$. 
The {\em (open) neighborhood} of $v \in V(G)$, $N_G(v)$, is the set of all neighbors of $v$, while $N_G[v]=N_G(v) \cup \{v\}$ denotes the {\em closed neighborhood} of $v$. Similarly, for $S \subseteq V(G)$, $N_G[S]= \bigcup_{v \in S}N_G[v]$. The {\em degree} of a vertex $v$ is $|N_G(v)|$. When the graph $G$ is clear from the context we omit the subscripts. A {\em matching} $M$ in a graph $G$ is a set of edges in $G$ having the property that no two
edges in $M$ have a common endvertex. For a matching $M$ in $G$, we denote by $V(M)$ the set of endvertices of edges from $M$. A set of pairwise non adjacent vertices in a graph $G$ is called the {\em independent set}. The cardinality of the largest independent set of vertices in $G$ is the {\em independence number} of $G$ and is denoted by $\alpha(G)$.

A {\em center} of a Kneser graph $K_{n,k}$ is a set $I(i)=\{x\in V(K_{n,k}):\, i\in x\}$, where $i\in [n]$. Note that $I(i)$ is an independent set of vertices of $K_{n,k}$, and $|I(i)|=\alpha(K_{n,k})={{n-1}\choose {k-1}}$. Note that for any $n \geq 2k$, $K_{2k,k}$ is an induced subgraph of $K_{n,k}$ and hence $diss(K_{n,k}) \geq diss(K_{2k,k})=|V(K_{2k,k})|={2k \choose k}$. We state this as follows.

\begin{prop}\label{prp:lowerBound}
For any $n \geq 2k$,  $diss(K_{n,k}) \geq {2k \choose k}$.
\end{prop}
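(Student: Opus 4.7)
The plan is to exhibit an explicit dissociation set of size $\binom{2k}{k}$ in $K_{n,k}$ for any $n \geq 2k$, and the cleanest way is via the induced subgraph $K_{2k,k}$ already mentioned in the preceding paragraph. So the first step is to formally verify that the ${2k \choose k}$ vertices corresponding to $k$-subsets of $[2k]\subseteq [n]$ induce a copy of $K_{2k,k}$ inside $K_{n,k}$: for two $k$-subsets $x,y\subseteq [2k]$, adjacency in $K_{n,k}$ is $x\cap y=\emptyset$, which is identical to the adjacency condition in $K_{2k,k}$.

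The second step is the crucial observation about the structure of $K_{2k,k}$: each vertex $x\in V(K_{2k,k})$ has a unique disjoint partner in $[2k]$, namely its complement $[2k]\setminus x$. Hence $K_{2k,k}$ is a disjoint union of $\tfrac12\binom{2k}{k}$ edges, i.e.\ a perfect matching, and in particular every vertex has degree exactly $1$.

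It follows immediately that the full vertex set $V(K_{2k,k})$ is itself a dissociation set of $K_{2k,k}$, giving $diss(K_{2k,k}) = |V(K_{2k,k})| = \binom{2k}{k}$. Since $V(K_{2k,k})$ remains a dissociation set when regarded as a subset of $V(K_{n,k})$ (the induced subgraph is still a perfect matching, no extra edges can appear within this set), we conclude $diss(K_{n,k})\geq \binom{2k}{k}$. There is no real obstacle here; the only thing to be careful about is making explicit that ``induced subgraph'' does the work of preserving the maximum-degree-$1$ property, so that the dissociation set of $K_{2k,k}$ lifts directly to a dissociation set of $K_{n,k}$.
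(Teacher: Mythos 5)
Your proof is correct and follows exactly the paper's argument: the $k$-subsets of $[2k]$ induce a copy of $K_{2k,k}$ in $K_{n,k}$, which is a perfect matching $\tfrac12\binom{2k}{k}K_2$, so its entire vertex set is a dissociation set of size $\binom{2k}{k}$. The extra care you take in verifying that the induced-subgraph relation preserves the maximum-degree-$1$ property is exactly the (implicit) content of the paper's one-line justification.
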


\section{Relations  with the independence number} 
\label{sec:dissociation}

We start the study with the simplest non-trivial Kneser graphs, that is, $K_{n,2}$, where $n\ge 5$. For the Petersen graph $K_{5,2}$, one can easily see that $D=\{\{1,2\},\{3,4\},\{1,3\},\{1,4\},\{2,3\},\{2,4\}\}$ is a dissociation set. Indeed, $D$ induces a subgraph with only three edges, namely $\{1,2\}\{3,4\}$,$\{1,3\}\{2,4\}$ and $\{1,4\}\{2,3\}$. It is also a largest dissociation set, hence $diss(K_{5,2})=6$. The same construction is optimal also in $K_{6,2}$ and $K_{7,2}$, but not for $K_{n,2}$, with larger $n$, as the following result shows.

\begin{prop} 
\label{prp:Kn2}
For $n\ge 5$, $diss(K_{n,2})=\max\{n-1,6\}$. 
\end{prop}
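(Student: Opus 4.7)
The plan is to prove both inequalities $diss(K_{n,2}) \ge \max\{n-1,6\}$ and $diss(K_{n,2}) \le \max\{n-1,6\}$ separately, with the structural content concentrated in the upper bound.

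For the lower bound, I would exhibit two dissociation sets and take the larger one. First, the center $I(1)=\{\{1,j\}:2\le j\le n\}$ is independent of size $n-1$. Second, the set of all six $2$-subsets of $\{1,2,3,4\}$ induces a copy of $K_{4,2}$, which is a perfect matching $3K_2$ (the three edges being $\{1,2\}\{3,4\}$, $\{1,3\}\{2,4\}$, $\{1,4\}\{2,3\}$), and hence is a dissociation set of size $6$ inside $K_{n,2}$ for any $n\ge 5$. Together these give $diss(K_{n,2}) \ge \max\{n-1,6\}$.

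For the upper bound, let $D$ be a maximum dissociation set in $K_{n,2}$ and split on whether $G[D]$ has an edge. If $G[D]$ has no edge, then $D$ is independent and $|D|\le \alpha(K_{n,2})=n-1$ by the Erd\H{o}s--Ko--Rado theorem. Otherwise, fix an edge of $G[D]$, which has the form $\{\{a,b\},\{c,d\}\}$ with $a,b,c,d$ pairwise distinct. Because $D$ is a dissociation set, neither $\{a,b\}$ nor $\{c,d\}$ may have a further neighbor in $D$, so every other vertex $w\in D$ must intersect \emph{both} $\{a,b\}$ and $\{c,d\}$. Since $|w|=2$, this forces $w$ to be a ``cross'' pair, i.e., $w\in\{\{a,c\},\{a,d\},\{b,c\},\{b,d\}\}$. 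Consequently
\[
D \subseteq \bigl\{\{a,b\},\{c,d\},\{a,c\},\{a,d\},\{b,c\},\{b,d\}\bigr\},
\]
which yields $|D|\le 6$. Combining both cases, $diss(K_{n,2})\le \max\{n-1,6\}$.

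There is no real obstacle: the only subtle step is the structural observation in the second case, namely that a single edge of $G[D]$ forces all remaining vertices of $D$ to live on the four underlying elements $\{a,b,c,d\}$. Once this is noticed, the argument collapses to a simple counting of the six $2$-subsets of a $4$-set, matching the explicit construction from the lower bound exactly.
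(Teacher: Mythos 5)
Your proof is correct and follows essentially the same route as the paper: the lower bound comes from the center and the six $2$-subsets of a $4$-set, and the upper bound from observing that an edge $\{a,b\}\{c,d\}$ in the induced subgraph forces every other vertex of $D$ to be one of the four cross pairs, giving $|D|\le 6$ in the non-independent case.
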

\begin{proof}
Note that a maximum independent set of a Kneser graphs $K_{n,2}$, where $n>5$, is a center, its size is $n-1$, and it is also a dissociation set. By the above observation, we get $diss(K_{n,2})\ge \max\{n-1,6\}$.
Suppose that $D$ is a dissociation set, which is not independent. Without loss of generality, let $\{\{1,2\},\{3,4\}\}\subset D$. Note that $V(K_{n,2})\setminus N[D]=\{\{1,3\},\{1,4\},\{2,3\},\{2,4\}\}$, which implies $|D|\le 6$. Hence, if $n> 7$, a maximum dissociation set is independent, and the proposed equality follows. 
\qed
\end{proof}

We find a similar feature for Kneser graphs $K_{n,k}$, where $k>2$. Namely, as soon as $n$ is large enough with respect to $k$, we have $diss(K_{n,k})=\alpha(K_{n,k})$. 

\begin{thm}
\label{thm:large-n}
For any $k\ge 2$, there exists $n_0\in\NN$ such that for all $n$, $n\ge n_0$, we have $$diss(K_{n,k})=\alpha(K_{n,k})={{n-1}\choose{k-1}}.$$
\end{thm}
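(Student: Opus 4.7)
The plan is to decompose any dissociation set $D\subseteq V(K_{n,k})$ as $D=I\cup V(M)$, where $M=\{\{A_1,B_1\},\ldots,\{A_m,B_m\}\}$ is the matching induced by $D$ in $K_{n,k}$ and $I$ is the set of vertices isolated in $G[D]$. Since $A_i$'s only neighbor in $D$ is $B_i$ (and vice versa), every vertex of $D\setminus\{A_i,B_i\}$ must intersect both $A_i$ and $B_i$ as a $k$-set. Hence, for any choice vector $\vec{X}=(X_1,\ldots,X_m)$ with $X_i\in\{A_i,B_i\}$, the family $\mathcal{G}_{\vec{X}}:=I\cup\{X_1,\ldots,X_m\}$ is pairwise intersecting, so the Erd\H{o}s--Ko--Rado theorem gives $|I|+m=|\mathcal{G}_{\vec{X}}|\le\binom{n-1}{k-1}$. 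This already yields $|D|=|I|+2m\le\binom{n-1}{k-1}+m$, and it remains to bound $m$.

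The key structural observation is that if $\vec{X}\ne\vec{Y}$ differ in coordinate $i$, then $\mathcal{G}_{\vec{X}}$ and $\mathcal{G}_{\vec{Y}}$ cannot share a common star center: such a center would lie in both $A_i$ and $B_i$, contradicting $A_i\cap B_i=\emptyset$. Suppose first that every $\mathcal{G}_{\vec{X}}$ is a star. Then the resulting $2^m$ centers are distinct elements of $[n]$, giving $2^m\le n$. Moreover, if $I\ne\emptyset$, each $X\in I$ lies in every $\mathcal{G}_{\vec{X}}$ and therefore contains all $2^m$ centers, which forces $2^m\le k$ and $|I|\le\binom{n-2^m}{k-2^m}\le\binom{n-2}{k-2}$. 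Hence $|D|\le\binom{n-2}{k-2}+2\log_2 k$, which is much smaller than $\binom{n-1}{k-1}$ for large $n$; the subcase $I=\emptyset$ gives $|D|=2m\le 2\log_2 n$, which is also negligible.

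If instead some $\mathcal{G}_{\vec{X}^*}$ is not a star, the Hilton--Milner theorem gives $|I|+m\le\binom{n-1}{k-1}-\binom{n-k-1}{k-1}+1$, so the inequality $|D|>\binom{n-1}{k-1}$ would force $m\ge\binom{n-k-1}{k-1}$. For $n$ sufficiently large, $2\binom{n-k-1}{k-1}>\binom{n-1}{k-1}+1$, so this size already exceeds the Hilton--Milner threshold for intersecting families of $k$-subsets of $[n]$. Consequently every family $\{X_1,\ldots,X_m\}$ must itself be a star, and applying the distinct-centers argument to these $2^m$ families yields $2^m\le n$, i.e.\ $m\le\log_2 n$, which contradicts $m\ge\binom{n-k-1}{k-1}=\Theta(n^{k-1})$.

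The main obstacle is the quantitative calibration of $n_0=n_0(k)$ so that all the asymptotic inequalities hold simultaneously: the uniqueness in Erd\H{o}s--Ko--Rado and the Hilton--Milner bound (requiring at least $n\ge 2k+1$), the comparison $2\binom{n-k-1}{k-1}>\binom{n-1}{k-1}+1$, and $\binom{n-2}{k-2}+2\log_2 k<\binom{n-1}{k-1}$. Each is straightforward once isolated, but keeping the two main cases under simultaneous control — the star case, where the combinatorial explosion of $2^m$ distinct centers does the work, and the non-star case, where Hilton--Milner closes the loop through a second application of the distinct-centers trick — is the conceptual heart of the argument.
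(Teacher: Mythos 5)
Your proof is correct, and it takes a genuinely different route from the paper's. The paper's argument is purely local: if $D$ is not independent, it fixes a \emph{single} edge $xy$ of the subgraph induced by $D$, notes that every other member of $D$ must intersect both $x$ and $y$, and counts that the number of $k$-sets meeting two fixed disjoint $k$-sets is $O(n^{k-2})$, which is eventually dwarfed by $\alpha(K_{n,k})=\binom{n-1}{k-1}=\Theta(n^{k-1})$ --- that is the whole proof, with no extremal set theory beyond the value of $\alpha$. You instead exploit all $m$ matching edges at once: Erd\H{o}s--Ko--Rado applied to the intersecting families $\mathcal{G}_{\vec X}$ gives $|D|\le\binom{n-1}{k-1}+m$, and $m$ is then controlled by Hilton--Milner together with the observation that distinct choice vectors have disjoint sets of star centers (a common center of $\mathcal{G}_{\vec X}$ and $\mathcal{G}_{\vec Y}$ differing in coordinate $i$ would lie in $A_i\cap B_i=\emptyset$), forcing $2^m\le n$, and $2^m\le k$ when $I\ne\emptyset$. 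I checked both of your cases and the logic holds up, including the step where $m\ge\binom{n-k-1}{k-1}$ pushes each family $\{X_1,\dots,X_m\}$ past the Hilton--Milner threshold; the deferred calibration of $n_0(k)$ is routine. As for what each approach buys: the paper's is shorter and elementary, and yields an explicitly solvable threshold (the smallest $n$ with $\binom{n-1}{k-1}\ge 2+\binom{n}{k}-2\binom{n-k}{k}+\binom{n-2k}{k}$); yours costs the Hilton--Milner theorem but produces the intermediate bound $diss(K_{n,k})\le\binom{n-1}{k-1}+m$ together with real structural control on the induced matching, giving a comparable (for small $k$ in fact slightly better) polynomial-in-$k$ threshold that could be of independent use for the paper's open problem of pinning down the optimal $n_0$. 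Neither method comes close to the true $n_0$ (e.g.\ $n_0=8$ for $k=3$).
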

\begin{proof}
The result for $k=2$ follows from Proposition~\ref{prp:Kn2}. 
Fix $k\ge 3$, and suppose that a maximum dissociation set $D$ is not an independent set. Assume without loss of generality that $x=\{1,\ldots,k\}$ and $y=\{k+1,\ldots,2k\}$ belong to $D$.  Let $U=V(K_{n,k})\setminus N[\{x,y\}]$. 
Note that every element in $U$ is a $k$-set that contains at least one element from $x$ and at least one element from $y$. Setting $z=\{2k+1,\ldots,n\}$ note that any element in $U$ consists of $i$ elements from $z$, where $0\le i\le k-2$, $j$ elements from $x$, where $1\le j\le k-i-1$, and consequently, $k-i-j$ elements from $y$. 

Hence 
$$|U|=\sum_{i=0}^{k-2}{{n-2k\choose i}}\sum_{j=1}^{k-i-1}{{k\choose j}{k\choose k-j-i}}.$$
Note that $\sum_{j=1}^{k-i-1}{{k\choose j}{k\choose k-j-i}}$ is not dependent on $n$, hence for a fixed $k$ this is a constant, while $\sum_{i=0}^{k-2}{{n-2k\choose i}}$ is a polynomial in $n$ of degree $k-2$. Hence $|U|=O(n^{k-2})$, and note that $|D|\le 2+|U|$.  On the other hand, $\alpha(K_{n,k})={n-1\choose k-1}$, hence the resulting independent (and dissociation) set is of size $\Omega(n^{k-1})$. 
Therefore, if $n$ is big enough, $D$ is not a maximum dissociation set, because its size is less than ${n-1\choose k-1}$.
\qed
\end{proof}

Note that the above proof relies on the fact that for any adjacent vertices $x$ and $y$ of a dissociation set $D$ of $G$, we have $D\subseteq D_{x,y}'=\{x,y\}\cup (V(G)\setminus (N[x]\cup N[y]))$. In fact, we show that for any $k\ge 2$ there exists $n_0' \in \mathbb{N}$ such that for any adjacent vertices $x,y \in V(K_{n,k})$ we have $\alpha(K_{n,k}) \geq |D'_{x,y}|$ as soon as $n \geq n_0'$.
In particular, the smallest $n_0$ that appears in the statement of Theorem~\ref{thm:large-n} may be much smaller than $n_0'$ which is used in the proof.
Note that $|D_{x,y}'|=2+ {n \choose k}-(2{n-k \choose k}-{n-2k \choose k})$. For $k=2$ the smallest $n_0'$, for which $\alpha(K_{n,2}) \geq |D_{x,y}'|$ when $n\ge n_0'$, is $7$, which is by Proposition~\ref{prp:Kn2} also $n_0$ from Theorem~\ref{thm:large-n} (that is, $diss(K_{n,2})=\alpha(K_{n,2})$ as soon as $n \geq n_0=7$). This is not the case for $k>2$. For $k=3$ one can easily compute that $n_0'=17$ (by solving inequality $\alpha(K_{n,k}) \geq |D_{x,y}'|$ for $k=3$), but as we will see in Corollary~\ref{cor:n_0 for k=3}, we have $diss(K_{n,3})=\alpha(K_{n,3})$ already for $n \geq 8$. For $k > 3$, we do not know how large must $n_0$ be in Theorem~\ref{thm:large-n} and we propose this as an open problem.

\begin{prob}
\label{prob:n_0}
Given an integer $k\ge 4$, what is the smallest integer $n_0$ such that for all $n\ge n_0$, $diss(K_{n,k})=\alpha(K_{n,k})$?
\end{prob}

From Proposition~\ref{prp:lowerBound} we get the inequality that leads to the lower bound for $n_0$ from Theorem~\ref{thm:large-n}. 

\begin{lemma}\label{l:k=3LowerBound} 
The smallest integer $n_0$ for which $diss(K_{n_0,k})=\alpha(K_{n_0,k})$, is at least $2k+2$.
\end{lemma}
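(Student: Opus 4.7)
The plan is to show that $diss(K_{n,k}) > \alpha(K_{n,k})$ for both $n=2k$ and $n=2k+1$, which directly forces any $n_0$ with the stated equality to satisfy $n_0 \geq 2k+2$. Since the hypothesis only excludes $n_0$ strictly less than $2k+2$, two values must be ruled out.

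First I would invoke Proposition~\ref{prp:lowerBound}, which gives the uniform lower bound $diss(K_{n,k}) \geq \binom{2k}{k}$ valid for every $n \geq 2k$. In particular, this applies to the two values $n=2k$ and $n=2k+1$ that I must exclude. On the independence side, the Erd\H{o}s--Ko--Rado theorem (as quoted in the introduction) yields $\alpha(K_{n,k}) = \binom{n-1}{k-1}$, so I need to compare $\binom{2k}{k}$ with $\binom{2k-1}{k-1}$ for $n=2k$, and with $\binom{2k}{k-1}$ for $n=2k+1$.

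Next I would do the two elementary ratio computations. For $n=2k$: $\binom{2k}{k} = 2\binom{2k-1}{k-1}$ via the standard identity $\binom{m}{r} = \tfrac{m}{r}\binom{m-1}{r-1}$, so $diss(K_{2k,k}) \geq \binom{2k}{k} = 2\alpha(K_{2k,k}) > \alpha(K_{2k,k})$. For $n=2k+1$:
\[
\frac{\binom{2k}{k}}{\binom{2k}{k-1}} = \frac{(k-1)!\,(k+1)!}{k!\,k!} = \frac{k+1}{k} > 1,
\]
so $diss(K_{2k+1,k}) \geq \binom{2k}{k} > \binom{2k}{k-1} = \alpha(K_{2k+1,k})$.

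Combining the two inequalities, for every $n \in \{2k, 2k+1\}$ we have $diss(K_{n,k}) > \alpha(K_{n,k})$, hence the smallest $n_0$ yielding equality is at least $2k+2$. There is no real obstacle in this argument: once Proposition~\ref{prp:lowerBound} is available, the entire proof reduces to recognizing that the ``universal'' lower bound $\binom{2k}{k}$ already exceeds the EKR value $\binom{n-1}{k-1}$ for the two borderline values of $n$.
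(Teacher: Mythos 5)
Your proof is correct and follows essentially the same route as the paper: both combine Proposition~\ref{prp:lowerBound} with the Erd\H{o}s--Ko--Rado value $\binom{n-1}{k-1}$ and compare binomial coefficients; the paper phrases it as "solving the inequality $\binom{n-1}{k-1}\ge\binom{2k}{k}$," while you make the two borderline cases $n=2k$ and $n=2k+1$ explicit, which is just a concrete instantiation of the same computation.
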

\begin{proof}
Let  $n \geq 2k$ and $diss(K_{n,k})=\alpha(K_{n,k})$. Then ${n-1 \choose k-1} \geq {2k \choose k}$ by Proposition~\ref{prp:lowerBound}. Solving this inequality we infer $n \geq 2k+2$. \qed
\end{proof}

We follow with establishing the exact value of $diss(K_{8,3})$.

\begin{lemma}\label{lema:K_8,3}
$diss(K_{8,3})=\alpha(K_{8,3})$.
\end{lemma}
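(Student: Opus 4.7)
My plan is proof by contradiction: suppose $D\subseteq V(K_{8,3})$ is a dissociation set with $|D|\ge 22$. Since $|D|>\alpha(K_{8,3})=21$, $D$ is not independent, so by vertex-transitivity I may assume the edge $\{x,y\}=\{\{1,2,3\},\{4,5,6\}\}$ lies in $G[D]$; every other vertex of $D$ then lies in $U:=V(K_{8,3})\setminus N[\{x,y\}]$, the $36$ triples meeting both $\{1,2,3\}$ and $\{4,5,6\}$. It therefore suffices to show $|D\cap U|\le 19$. I partition $U=A_1\sqcup A_2\sqcup A_3$ by intersection pattern: $A_1$ the $18$ triples of type $(1,1,1)$, $A_2$ the $9$ triples of type $(2,1,0)$, and $A_3$ the $9$ triples of type $(1,2,0)$. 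Quick counting shows that $A_2$ and $A_3$ are each independent, the $A_2$--$A_3$ edges form the perfect matching $u\leftrightarrow [6]\setminus u$, every vertex in $A_1$ has exactly $2$ neighbors in each of $A_2,A_3$ and $4$ within $A_1$, and every vertex in $A_2\cup A_3$ has exactly $4$ neighbors in $A_1$. Write $d_i=|D\cap A_i|$ and let $e_{ij}$ count the edges of $G[D\cap U]$ within or across the indicated parts.

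The main engine is a pair of double-counting inequalities. Summing degrees in $G[D\cap U]$ over $A_1\cap D$ yields $2e_{11}+e_{12}+e_{13}\le d_1$; combining with the non-edge count
\[
4d_1-(e_{12}+e_{13})\;=\;\sum_{u\in(A_2\cup A_3)\setminus D}|N(u)\cap A_1\cap D|\;\le\;4(18-d_2-d_3)
\]
gives $3d_1+4(d_2+d_3)\le 72-2e_{11}$ $(\mathrm{A})$. Symmetrically, summing degrees over $(A_2\cup A_3)\cap D$ and using $|N(v)\cap A_i|=2$ for $v\in A_1$, $i\in\{2,3\}$, gives $4d_1+3(d_2+d_3)\le 72-2e_{23}$ $(\mathrm{B})$. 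Adding (A)+(B) produces $7|D\cap U|+2(e_{11}+e_{23})\le 144$, so $|D\cap U|\ge 20$ forces $e_{11}+e_{23}\le 2$. A short case distinction on $(e_{11},e_{23})$, augmented by the ceilings $d_1\le 9+e_{11}$ and $d_2+d_3\le 9+e_{23}$ (each from the identity $|S|\le\alpha(H)+|M|$ applied to a dissociation set $S=I\cup V(M)$ in a graph $H$, with $\alpha(G[A_1])=9$ from bipartiteness and $\alpha(G[A_2\cup A_3])=9$ from the perfect matching), eliminates every case except $(e_{11},e_{23})=(1,1)$: the cases $(0,0),(0,1),(1,0)$ allow only $|D\cap U|\le 19$, while $(0,2)$ and $(2,0)$ violate (B) or (A) respectively once $|D\cap U|=20$ is substituted. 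The remaining case $(1,1)$ forces $d_1=d_2+d_3=10$ with equality in both (A) and (B).

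The hard part is closing this critical case. Equality in (A) forces every $u\in(A_2\cup A_3)\setminus D$ to have all $4$ of its $A_1$-neighbors in $D$, and equality in (B) forces every $v\in A_1\setminus D$ to have all $4$ of its $(A_2\cup A_3)$-neighbors in $D$; equivalently, no edge of $G[U]$ runs between the two $8$-element sets $A_1\setminus D$ and $(A_2\cup A_3)\setminus D$. I plan to refute this using the natural pair structure of $A_1$: the triples $\{i,j,7\}$ and $\{i,j,8\}$ share the same four $(A_2\cup A_3)$-neighbors, so $A_1$ decomposes into $9$ pairs indexed by $(i,j)\in\{1,2,3\}\times\{4,5,6\}$. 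Letting $c$ be the number of pairs entirely inside $A_1\setminus D$, the pair-set $Q$ meeting $A_1\setminus D$ has size $|Q|=8-c$, $c\in\{0,1,2,3,4\}$. Using that two pairs in the same ``row'' (same $\{1,2,3\}$-element) share exactly one $A_2$-neighbor and two pairs in the same ``column'' share exactly one $A_3$-neighbor, one obtains
\[
|N(Q)\cap(A_2\cup A_3)|\;=\;\sum_{i=1}^{3}f(q_i)+\sum_{j=1}^{3}f(q^j),
\]
where $q_i,q^j$ are the row/column marginals of $Q$ and $f(0)=0$, $f(1)=2$, $f(2)=f(3)=3$. A finite check over admissible row/column distributions for each $c$ shows $|N(Q)\cap(A_2\cup A_3)|\ge 12$; but the ``no edges'' condition demands $(A_2\cup A_3)\setminus D\subseteq(A_2\cup A_3)\setminus N(Q)$, i.e.\ $|N(Q)\cap(A_2\cup A_3)|\le 10$, a contradiction. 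Hence $|D|\le 21$, and combined with the lower bound $diss(K_{8,3})\ge\alpha(K_{8,3})=21$ this proves the lemma.
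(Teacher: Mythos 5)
Your proof is correct, and it takes a genuinely different route from the paper's. Both arguments open the same way --- assume a dissociation set of size at least $22$, fix the edge between $x=\{1,2,3\}$ and $y=\{4,5,6\}$, and restrict attention to the $36$ triples outside $N[x]\cup N[y]$ --- but they diverge from there. The paper partitions these $36$ vertices into three isomorphic $12$-vertex pieces $X_1'',X_2'',X_3''$ and shows by a structural case analysis (exploiting several induced $6$-cycles and two preparatory claims) that each piece meets the dissociation set in at most $6$ vertices, giving $|S|\le 20$. You instead split by intersection type into $A_1$ (the $18$ triples using $7$ or $8$) and $A_2\cup A_3$ (the $18$ triples inside $[6]$), and run two double-counting inequalities whose sum immediately caps $|D\cap U|$ at $20$ and isolates the single tight configuration $(e_{11},e_{23})=(1,1)$ with $d_1=d_2+d_3=10$; tightness is then refuted by a neighborhood-expansion count over the $3\times 3$ grid of twin pairs $\{i,\ell,7\},\{i,\ell,8\}$, which forces at least $12$ vertices of $A_2\cup A_3$ into $D$ against the available $10$. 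Your route is arguably more systematic, since (A) and (B) are generic consequences of the biregular structure between $A_1$ and $A_2\cup A_3$ and only the final grid computation is ad hoc, at the cost of the slightly weaker (but still sufficient) conclusion $|S|\le 21$. I verified the local counts (each $A_1$-vertex has $2$ neighbors in each of $A_2,A_3$ and $4$ in $A_1$; each vertex of $A_2\cup A_3$ has $4$ neighbors in $A_1$; the $A_2$--$A_3$ edges form the matching $u\leftrightarrow[6]\setminus u$; the values $f(0)=0$, $f(1)=2$, $f(2)=f(3)=3$; and the bound $\sum_i f(q_i)+\sum_j f(q^j)\ge 12$ over all admissible marginals for $c\in\{0,\dots,4\}$). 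One small point to tighten: the ceiling $d_1\le 9+e_{11}$ requires the upper bound $\alpha(G[A_1])\le 9$, and bipartiteness alone only gives $\alpha(G[A_1])\ge 9$; the upper bound does hold because $G[A_1]$ is a $4$-regular bipartite graph on $9+9$ vertices, hence has a perfect matching, and K\H{o}nig's theorem then yields $\alpha(G[A_1])=18-9=9$.
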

\begin{proof}
For the purpose of contradiction assume that $diss(K_{8,3}) > 21$. Let $S$ be a maximum dissociation set. Since $|S| > \alpha(K_{8,3})$, $S$ is not independent. Without loss of generality we may assume that $x=\{1,2,3\},y=\{4,5,6\}$ are two adjacent vertices contained in $S$.  Since $S$ is a dissociation set, $S \cap (N[x] \cup N[y])=\{x,y\}$. Let $H$ be the subgraph of $K_{8,3}$ induced by $V(K_{8,3})\setminus (N[x] \cup N[y])$. We define the following sets
\begin{itemize}
\item $U=\{z \in V(H); 7 \in z\}$, $U'=\{z \in V(H); 8 \in z\}$;
\item $\forall i \in [6]$, $U_i=\{z \in U; i \in z\}$, $U'_i=\{z \in U'; i \in z\}$;
\item $D=V(H)\setminus (U\cup U')$;
\item $D'=\{z \in D; |z \cap x|=2\}$, $D''=\{z \in D; |z \cap y|=2\}$;
\item $\forall i,j \in [3],~D_{ij}'=\{z \in D'; i,j \in z\}$;
\item $\forall i,j \in \{4,5,6\}, ~D_{ij}''=\{z \in D''; i,j \in z\}$;
\item $\forall i \in \{4,5,6\},~D_{i}'=\{z \in D'; i \in z\}$;
\item $\forall i \in [3]$ $D_{i}''=\{z \in D''; i \in z\}$.
\end{itemize}
If $z \in V(K_{8,3})$ with $\{7,8\} \subseteq z$, then $z \in N[x] \cup N[y]$. Thus $U \cap U' = \emptyset$, and so $[U,U',D]$ is a partition of $V(H)$.
Also $[U_1,U_2,U_3]$ and $[U_4,U_5,U_6]$ are partitions of $U$, $[U_1',U_2',U_3'], [U_4',U_5',U_6']$ are partitions of $U'$, $[D_{12}',D_{13}',D_{23}'], [D_4',D_5',D_6']$ are partitions of $D'$ and $[D_{45}'',D_{46}'',D_{56}''], [D_1'',D_2'',D_3'']$ are partitions of $D''$. Moreover, sets $U,U',D',D''$ are independent sets of cardinality 9. A spanning subgraph $H'$ of a graph $H$ is depicted in Figure~\ref{fig:H}. The edges of $H$ that are not in $H'$ are the edges between $U$ and $U'$, between $U$ and $D''$, between $D'$ and $D''$. Note that $H[U \cup D'] \cong H[U \cup D''] \cong H[U' \cup D'']$.

\begin{figure}[!ht]
\begin{center}
\includegraphics[width=10cm]{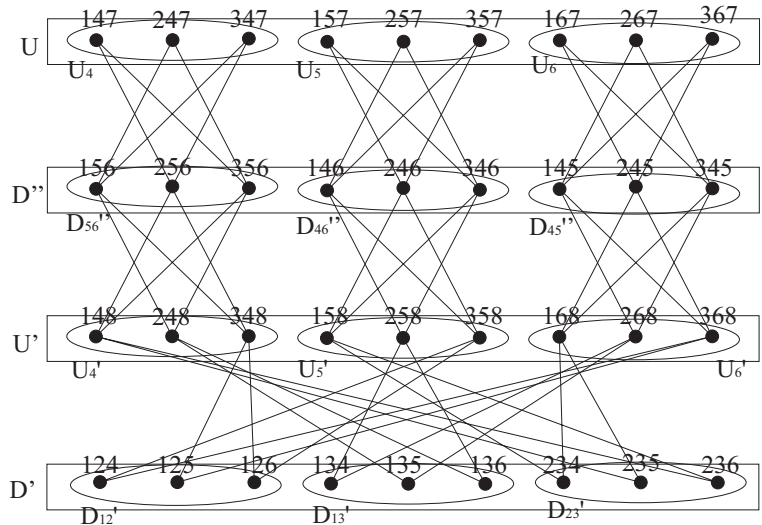}
\caption{Spanning subgraph of a graph $H$.}
\label{fig:H}
\end{center}
\end{figure}

\begin{claim}\label{c:1}
$|U_i \cap S| < 3$ for any $i \in \{4,5,6\}$.
\end{claim}
\begin{claimproof}
Suppose that there exists $i \in \{4,5,6\}$ such that $|U_i \cap S|=3$ and let $\{i,j,k\}=\{4,5,6\}$. Hence $(U_j' \cup U_k' \cup D_{jk}'') \cap S=\emptyset$, as each vertex of $U_j' \cup U_k' \cup D_{jk}''$ has exactly 2 neighbors in $U_i$. Let $\ell$ be an arbitrary element of $\{1,2,3\}$ and let $\{\ell,\ell',\ell''\}=\{1,2,3\}$. Since $\{\ell',\ell'',j\},\{\ell',\ell'',k\}$ are both neighbors of $\{l,i,7\} \in U_i \subseteq S$, $|\{ \{\ell',\ell'',j\},\{\ell',\ell'',k\}\} \cap S| \leq 1$. Note that subgraphs of $H$ induced by $U_j \cup D_{ik}''$ and $U_k \cup D_{ij}''$ are isomorphic to $C_6$ and thus $|(U_j \cup D_{ik}'') \cap S|, |(U_k \cup D_{ij}'') \cap S|  \leq 4$.
If $|(U_j \cup D_{ik}'') \cap S| = 4$, then $S$ contains exactly 2 vertices from $U_j$ and thus it contains at most 2 verices from $U_i'$ (as $H[U_j \cup U_i'] \cong C_6$). Therefore $|(U_j \cup D_{ik}''\cup U_i') \cap S| \leq 6$. Hence,

\noindent $|S \cap V(H)|=|S \cap U_i|+|S \cap (U_j' \cup U_k' \cup D_{jk}'')|+ |S \cap (U_j \cup D_{ik}'' \cup U_i')|+ |S \cap (U_k \cup D_{ij}'')|+|S \cap D_{12}'|+|S \cap D_{13}'|+|S \cap D_{23}'| \leq 3+0+6+4+2+2+2=19. $
%
We infer $|S| \leq 21$, a contradiction.
\end{claimproof} 

Analogous arguments imply

\begin{claim}\label{c:2}
For any $i \in \{4,5,6\}$  $|U_i' \cap S| < 3$.
\end{claim}

\begin{claim}\label{c:3}
For any $i \in [3]$ let $\{i,j,k\}=\{1,2,3\}$. Then $D_{jk}' \cap S \neq \emptyset$.
\end{claim}

\begin{claimproof}
Suppose that there exists $i \in \{1,2,3\}$, such that $|D_{jk}' \cap S|=0$, where $\{j,k\}=\{1,2,3\} \setminus \{i\}$. For $\ell \in \{4,5,6\}$ and $\{\ell',\ell''\} \in \{4,5,6\}\setminus \{\ell\}$ let $A_{\ell}$ be the subgraph of $H$ induced by $U_{\ell} \cup D_{\ell'\ell''}'' \cup U_{\ell'}$. Note that the subgraphs of $A_{\ell}$ induced by  $U_{\ell} \cup D_{\ell'\ell''}''$ and by  $D_{\ell'\ell''}'' \cup U_{\ell'}$ are isomorphic to $C_6$. 

Suppose that $|A_{\ell} \cap S| > 4$. Then $D_{\ell'\ell''}'' \cap S \neq \emptyset$, as $|U_{\ell} \cap S| \leq 2$ and $|U_{\ell}' \cap S| \leq 2$ by Claims~\ref{c:1} and \ref{c:2}. Without loss of generality we may assume that $\{1,\ell',\ell''\} \in S \cap D_{\ell' \ell''}''$. Since $\{2,\ell,8\},\{3,\ell,8\},\{2,\ell,7\},\{3,\ell,7\}$ are all neighbors of $\{1,\ell',\ell''\} \in S$, we get  $|\{ \{2,\ell,8\},\{3,\ell,8\},\{2,\ell,7\},\{3,\ell,7\}\} \cap S| \leq 1$. Since $\{1,\ell,8\},\{1,\ell ,7\}$ are both neighbors of $\{2,\ell',\ell''\}$ and $\{3,\ell',\ell''\}$, the following statements hold.
\begin{itemize}
\item If $\{\{2, \ell',\ell''\},\{3,\ell',\ell''\}\} \subseteq S$, then $S \cap \{\{1,\ell,8\},\{1,\ell ,7\}\} = \emptyset $;
\item If $|S \cap \{\{1,\ell,8\},\{1,\ell ,7\}\}|=2$, then $S \cap \{\{2, \ell',\ell''\},\{3,\ell',\ell''\}\} = \emptyset$.
\end{itemize}
Thus $S$ contains at most two vertices from $\{\{2, \ell',\ell''\},\{3,\ell',\ell''\},\{1,\ell,8\},\{1,\ell ,7\}\}$ and consequently $|A_{\ell} \cap S| \leq 4$, a contradiction. Hence $|A_{\ell} \cap S| \leq 4$ for any $\ell \in \{4,5,6\}$. Since $D_{jk}' \cap S = \emptyset$, $|S \cap D'| \leq 6$ and thus $$|V(H) \cap S| = \sum_{l=4}^6 |A_l \cap S|+ |D' \cap S| \leq 3\cdot 4 +6=18,$$ a contradiction. 
\end{claimproof}

For any $i \in [3]$ define $X_i''$ as a subgraph of $H$ induced by vertices of $D_i'' \cup U_i \cup U_i' \cup  D_{jk}'$, where $\{j,k\}=[3]\setminus \{i\}$.

Let $i \in [3]$ and $\{i,j,k\}=\{1,2,3\}$. Since by Claim~\ref{c:3} $D_{jk}' \cap S \neq \emptyset$, we have three possibilities.

If $|D_{jk}' \cap S|=1$. Let $\ell \in \{4,5,6\}$ such that $\{j,k,\ell\} \in S$ and let $\{4,5,6\}\setminus\{\ell\}=\{\ell',\ell''\}$. Since $\{j,k,\ell \}$ is adjacent to all vertices from $A=\{\{i,\ell',\ell''\},\{i,\ell',7\},\{i,\ell'',7\},\{i,\ell',8\},\{i,\ell'',8\}\}$, $|S \cap A| \leq 1$. Since $|X_i''\setminus(A \cup D_{jk}')|=4$, $|S \cap A| \leq 1$ and $|S \cap D_{jk}'|=1$ we get $|S \cap V(X_i'')| \leq 6$.

If $|D_{jk}' \cap S|=2$, let $\ell,\ell' \in \{4,5,6\}$ such that $\{j,k,\ell\},\{j,k,\ell'\} \in S$ and let $\{4,5,6\} \setminus \{\ell,\ell'\}=\{\ell''\}$. Since $\{i,\ell'',7\},\{i,\ell'',8\}$ are adjacent to both vertices from $D_{jk}' \cap S$,  $\{i,\ell'',7\},\{i,\ell'',8\} \notin S$. Since $A=\{\{i,\ell',\ell''\},\{i,\ell',7\},\{i,\ell',8\}\}$ is the set of neighbors in $X_i''$ of $\{j,k,\ell\} \in D_{jk}' \cap S$, at most one vertex from $A$ can be contained in $S$. Similarly at most one vertex from $B=\{\{i,\ell,\ell''\},\{i,\ell,7\},\{i,\ell,8\}\}$ can be contained in $S$, as all vertices of $B$ are neighbors of $\{j,k,\ell'\}$. Since $|X_i''-(A \cup B \cup D_{jk}')|=1$, $|S \cap V(X_i'')| \leq |S \cap D_{jk}'|+|S \cap A|+ |S \cap B|+1 \leq 2+1+1+1=5$.

Finally, let $S \cap D_{jk}' =D_{jk}'$. Then $U_i \cap S = U_i' \cap S = \emptyset$ and thus $|V(X_i'') \cap S| \leq 6$.

\vspace{2mm}

We have proved that for any $i \in [3]$ it holds $|V(X_i'') \cap S| \leq 6$. Therefore $$|S \cap V(H)|=|S \cap V(X_1'')|+|S \cap V(X_2'')|+|S \cap V(X_3'')| \leq 18,$$ which is a final contradiction.   
\qed
\end{proof}

\begin{prop}\label{p:d=a,k=3}
Let $n \geq 9$. If $diss(K_{n-1,3})=\alpha(K_{n-1,3})$ then $diss(K_{n,3})=\alpha(K_{n,3}).$
\end{prop}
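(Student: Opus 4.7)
The plan is to argue by contradiction: suppose a maximum dissociation set $S$ of $K_{n,3}$ satisfies $|S|>\alpha(K_{n,3})=\binom{n-1}{2}$, and use the hypothesis $diss(K_{n-1,3})=\binom{n-2}{2}$ to reach a contradiction. Since $|S|>\alpha(K_{n,3})$, $S$ is not independent and must contain some edge $\{x,y\}$ of $K_{n,3}$. After relabeling we may assume $x=\{1,2,3\}$ and $y=\{4,5,6\}$. Because $S$ is a dissociation set and $xy\in E(K_{n,3}[S])$, the unique $S$-neighbor of $x$ is $y$ (and vice versa), so no other element of $S$ is adjacent in $K_{n,3}$ to $x$ or to $y$.

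For each $i\in[n]$, the subgraph of $K_{n,3}$ induced by $V(K_{n,3})\setminus I(i)$ is isomorphic to $K_{n-1,3}$, and $S\setminus I(i)$ is still a dissociation set in it; the hypothesis then gives $|S\setminus I(i)|\le \binom{n-2}{2}$, so $|B_i|:=|S\cap I(i)|\ge |S|-\binom{n-2}{2}>n-2$, i.e.\ $|B_i|\ge n-1$ for every $i$. Focusing on the indices in $J:=[n]\setminus(x\cup y)$ (of which there are $n-6\ge 3$), any $v=\{i,a,b\}\in B_i$ must be non-adjacent to both $x$ and $y$ in $K_{n,3}$, so $v$ meets each of them; as $i\notin x\cup y$, this forces $\{a,b\}$ to place exactly one element in $x$ and one in $y$. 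Consequently $B_i\subseteq\{\{i,a,b\}:a\in x,\,b\in y\}$ and $|B_i|\le 9$.

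The two bounds $n-1\le|B_i|\le 9$ already yield an immediate contradiction for $n\ge 11$. The main obstacle is closing the remaining cases $n\in\{9,10\}$, where these bounds overlap; I would handle them by a neighbor count. Pick distinct $i,i'\in J$ (possible since $|J|\ge 3$) and any $v=\{i,a,b\}\in B_i$. The candidate neighbors of $v$ of the form $\{i',a',b'\}$ with $a'\in x,\,b'\in y$ are exactly those with $a'\in x\setminus\{a\}$ and $b'\in y\setminus\{b\}$, hence $2\cdot 2=4$ of them; but at most $9-(n-1)=10-n$ can be missing from $B_{i'}$, so at least $n-6\ge 3$ of them lie in $B_{i'}\subseteq S$. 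Thus $v$ has at least three neighbors in $S$, contradicting the dissociation property. Combined with the trivial lower bound $diss(K_{n,3})\ge\alpha(K_{n,3})$, this finishes the proof.
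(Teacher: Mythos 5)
Your proof is correct, and it takes a genuinely different route from the paper's. Both arguments exploit the same basic fact --- deleting a ground element $i$ leaves an induced copy of $K_{n-1,3}$, so the hypothesis bounds $|S\setminus I(i)|$ by $\binom{n-2}{2}$ --- but you apply it uniformly to \emph{every} $i$, deducing the clean lower bound $|S\cap I(i)|\ge n-1$ for all $i$, and then play it off against the structural upper bound $|S\cap I(i)|\le 9$ valid for $i\notin x\cup y$ (which follows correctly from the fact that no vertex of $S\setminus\{x,y\}$ may be adjacent to $x$ or $y$). This kills $n\ge 11$ immediately, and your neighbor count for $n\in\{9,10\}$ is sound: a vertex $\{i,a,b\}\in B_i$ has exactly $4$ neighbors among the $9$ candidates for $B_{i'}$, of which at most $10-n\le 1$ can be absent from $B_{i'}$, so it acquires at least $3$ neighbors in $S$, violating the dissociation property. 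The paper instead fixes the edge $xy$, partitions $V(K_{n,3})\setminus N[\{x,y\}]$ according to whether a set contains the element $n$, applies the hypothesis only to the copy of $K_{n-1,3}$ on $[n-1]$, and finishes with a case analysis on the type of a vertex of $S$ outside that copy. Your version is shorter and more symmetric, and it isolates the role of the inductive hypothesis more transparently; the paper's is more hands-on but needs no separate treatment of small $n$ beyond the single inequality $\binom{n-2}{2}+6\le\binom{n-1}{2}$.
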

\begin{proof}
Suppose that there exists $n\geq 9$ with $diss(K_{n-1,3})=\alpha(K_{n-1,3})$ and $diss(K_{n,3}) > \alpha(K_{n,3})= {n-1 \choose 2}$. Let $S$ be a maximum dissociation set in $K_{n,3}$. Since $|S| > \alpha(K_{n,3})$, $S$ is not an independent set. Note that since $n\geq 9$, $|S| \geq {n-1 \choose 2}+1 \geq 29$. Without loss of generality we may assume that $x=\{1,2,3\}, y=\{4,5,6\} \in S$. Since $S$ is a dissociation set, $(S \cap (N[x] \cup N[y])) \setminus \{x,y\} = \emptyset$. Let $H$ be the subgraph of $K_{n,3}$ induced by $V(K_{n,3}) \setminus (N[x] \cup N[y]).$ Let $U=\{z \in V(H);~ n \in z\}$ and $D=\{z \in V(H);~n \notin z\}$. Since each vertex $z  \in U$ contains exactly one element from $x$ and exactly one element from $y$, $|U|=9$. 

If $D \cap S = \emptyset$, then $|S| \leq 2+9=11$, a contradiction. Hence we may assume that $D \cap S \neq \emptyset$. Let $z \in D \cap S$. Then at least one element from $x$, say $i$, and at least one element from $y$, say $\ell$, is contained in $z$. Denote $z=\{i,\ell,w\}$.

Suppose first that $w \notin x \cup y$. Let $\{1,2,3\}=\{i,j,k\}$, $\{4,5,6\}=\{\ell, \ell', \ell''\}$. Then all vertices of  $A=\{\{j,\ell',n\},\{j,\ell'',n\},\{k,\ell',n\},\{k,\ell'',n\}\} \subseteq U$ are neighbors of $z \in S$. Hence $|S \cap A| \leq 1$ and therefore $|S \cap U| \leq 6.$ Since vertices of $N[x] \cup N[y]$ that do not contain $n$ together with $D$ induce $K_{n-1,3}$, $|S| \leq \alpha(K_{n-1,3}) +|U \cap S| \leq {n-2 \choose 2}+6$. Hence for any $n \geq 8$ we get $|S| \leq  \alpha(K_{n,3})$, a contradiction. 

Hence $w \in x \cup y$, or, in other words, $S$ does not contain vertices $\{i,j,z\}$, where $i \in \{1,2,3\}, j\in \{4,5,6\}, z \in \{7,\ldots , n-1\}$. Thus if $d \in S \cap D$, then $d=\{i_1,i_2,\ell\}$ or $d=\{i,\ell_1,\ell_2\}$, where $i,i_1,i_2\in \{1,2,3\}$, $\ell,\ell_1,\ell_2 \in \{4,5,6\}$. Hence $|S \cap D| \leq 18$. If $S$ contains $\{i_1,i_2,\ell\}$ for  $\{i,i_1,i_2\}= \{1,2,3\}$, $\{\ell,\ell_1,\ell_2\} = \{4,5,6\}$, then $S$ cannot contain both $\{i,\ell_1,n\},\{i,\ell_2,n\} \in U$ and thus $|S \cap (U \cup D)| \leq 26$. Therefore $|S| \leq 28$, a contradiction.  \qed
\end{proof}

We suspect that Proposition~\ref{p:d=a,k=3} also holds for $k$ bigger than $3$, and pose it as a problem. 

\begin{prob} Is it true that $diss(K_{n,k})=\alpha(K_{n,k})$ implies $diss(K_{n+1,k})=\alpha(K_{n+1,k})$ for all $k\ge 2$ and $n\ge 2k+2$?
\end{prob}

As a direct corollary of Lemma~\ref{l:k=3LowerBound}, Lemma~\ref{lema:K_8,3} and Proposition~\ref{p:d=a,k=3} we get that $diss(K_{n,3})=\alpha(K_{n,3})$ for any $n \geq 8$.

\begin{cor}\label{cor:n_0 for k=3}
For $k=3$, $diss(K_{n,k})=\alpha(K_{n,k})$ if and only if $n \geq 8$.
\end{cor}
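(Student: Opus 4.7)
The plan is to combine the three preceding results, noting that the corollary is essentially a direct synthesis of them, with one direction being an induction and the other being an immediate application of a lower-bound lemma.

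For the forward direction, I would assume $diss(K_{n,3}) = \alpha(K_{n,3})$ and apply Lemma~\ref{l:k=3LowerBound} with $k=3$. That lemma (whose proof just combines $diss(K_{n,k}) \geq \binom{2k}{k}$ from Proposition~\ref{prp:lowerBound} with the equation $\alpha(K_{n,k}) = \binom{n-1}{k-1}$) immediately yields $n \geq 2k+2 = 8$. So there is nothing more to do in this direction.

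For the backward direction, I would proceed by induction on $n \geq 8$. The base case $n=8$ is precisely the content of Lemma~\ref{lema:K_8,3}, which asserts $diss(K_{8,3}) = \alpha(K_{8,3})$. For the inductive step, suppose $n \geq 9$ and $diss(K_{n-1,3}) = \alpha(K_{n-1,3})$; then Proposition~\ref{p:d=a,k=3} delivers $diss(K_{n,3}) = \alpha(K_{n,3})$ at once.

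There is no real obstacle here, since all the technical work (the detailed structural case analysis inside $K_{8,3}$ for the base case, and the counting argument extending equality from $n-1$ to $n$) has already been carried out in Lemma~\ref{lema:K_8,3} and Proposition~\ref{p:d=a,k=3}. The corollary is thus a one-line induction wrapped around those two results, together with a single application of Lemma~\ref{l:k=3LowerBound} to rule out $n \leq 7$.
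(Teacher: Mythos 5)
Your proposal is correct and matches the paper's argument exactly: the authors also obtain the corollary as a direct combination of Lemma~\ref{l:k=3LowerBound} (for the "only if" direction), Lemma~\ref{lema:K_8,3} (base case $n=8$), and Proposition~\ref{p:d=a,k=3} (inductive step). Nothing further is needed.
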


\section{An upper bound for $diss(K_{n,k})$}
\label{sec:bound}

In this section, we consider upper bounds for the dissociation number of Kneser graphs $K_{n,k}$. We already know that $\alpha(K_{n,k}) \leq diss(K_{n,k}) \leq 2\alpha(K_{n,k})$ and that if $n$ is large enough, then the dissociation number coincide with the independence number. Thus the bound is interesting only when $n<n_0$, where $n_0$ is the integer that appears in Theorem~\ref{thm:large-n} and Problem~\ref{prob:n_0}. Our aim of this section is to improve the upper bound $2 \alpha(K_{n,r})$ for $n < n_0$.

We will improve the upper bound by using Katona's cyclic arrangement of integers from his proof of Erd\H{o}s-Ko-Rado theorem. Let $\cal D$ be a maximum dissociation set of $K_{n,k}$. We count in two different ways the number of ordered pairs $(D,C)$, where $D\in \cal D$ and $C$ is a cyclic arrangement of integers from $[n]$ in which $D$ appears as a substring. 

Let $n=2k+r$, thus we consider $K_{2k+r,k}$, where $r\ge 1$.
If one takes any set from $\cal D$, then it appears as a substring in $k!(n-k)!$ different cyclic arrangements. Thus, altogether there are $|{\cal D}|k!(n-k)!$ such ordered pairs $(D,C)$. Second, note that there are $(n-1)!$ different cyclic arrangements of integers from $[n]$. Next, let us bound from above the number of sets from $\cal D$ that appear as substrings in any given cyclic arrangement. 


We distinguish two cases. First, let $r>k-2$. We claim that in any given cyclic arrangement there are at most $k+1$ elements from $\cal D$ that appear as its substrings. Suppose that all elements of $\cal D$ that appear as substring in $C$ pairwise intersect. Then, it is easy to see that at most $k$ elements from $\cal D$ appear as substrings in $C$.  Without loss of generality, let $D_1:1,2,\ldots, k$, and $D_2:k+t,k+t+1,2k+t-1$, $t\in [n-2k+1]$, be the substrings in $C$ that correspond to elements of $\cal D$. Since the sets that correspond to $D_1$ and $D_2$ form an edge in $K_{n,r}$, we infer that all other sets of $\cal D$ that appear as substrings in $C$ must intersect both $D_1$ and $D_2$. Since $r>k-2$, we infer that there are at most $k-1$ such substrings of length $k$ that intersect both $D_1$ and $D_2$. This implies that there are at most $k+1$ elements from $\cal D$ that appear as substrings, as claimed. Thus, when $n> 3k-2$, we get 
\begin{equation}
\label{eq:rbig}
diss(K_{n,k})\le \frac{k+1}{k}{n-1\choose k-1}\,.
\end{equation}

The second case is $r\le k-2$. Again, let $D_1:1,2,\ldots, k$ be a substring in a cyclic arrangement $C$ that corresponds to an element of $\cal D$ (by abuse of language, we denote this element by $D_1$ as well). Clearly, there is at most one set in $\cal D$ that does not intersect $D_1$. 
Note that a set $D$ in ${\cal D} \setminus \{D_1\}$ can intersect $D_1$ in two different ways, either $1\in D$ or $k\in D$. We denote by $A_i$ the substring in $C$ for which $A_i\cap D_1=[i]$, and by $B_i$ the substring in $C$ for which $B_i\cap D_1=[k] \setminus [i]$. If for some $i\in [n-1]$, $A_i\in \cal D$ and $B_i\in \cal D$, then $i$ is a {\em double point}. On the other hand, if just one of $A_i\in \cal D$ or $B_i\in \cal D$ holds, then $i$ is a {\em single point}. Let $d$ be the number of double points and $s$ the number of single points. Note that the number of substrings of $C$ that correspond to elements of $\cal D$ is bounded from above by $2+s+2d$, where 2 coresponds to $D_1$ and possibly one more element from $\cal D$ that does not intersect $D_1$. 

Suppose that $i$ is a double point. Since $\cal D$ is a dissociation set, any set in ${\cal D}\setminus \{A_i,B_i\}$ must intersect both $A_i$ and $B_i$. This in turn implies that $A_{i-1},\ldots,A_{\max\{i-r,1\}}$ do not belong to $\cal D$ and also $B_{i+1},\ldots,B_{\max\{i+r,k\}}$ do not belong to $\cal D$. In other words, a double point can appear at most in every $2r+1$ turn, that is, at most $\lceil \frac{k}{2r+1} \rceil$ times. Hence,  the number of substrings of $C$ that correspond to elements of $\cal D$ is bounded from above by $2+s+2d=2+k-1+\lceil \frac{k}{2r+1} \rceil \leq 2+k+\frac{k}{2r+1}$.  This yields
$$|{\cal D}|(n-k)!k! \le (2+k+\frac{k}{2r+1})(n-1)!\,$$
which implies 
\begin{equation}
\label{eq:second}
diss(K_{2k+r,k})\le 2\frac{rk+2r+k+1}{k(2r+1)}{n-1\choose k-1}\,.
\end{equation}



\section{Dissociation number of odd graphs}
\label{sec:odd}

In this section, we use the famous Hall's marriage theorem, which we next formulate. Let $G$ is a bipartite graph, where a bipartition of $V(G)$ is $[X,Y]$. A matching $M$ in $G$ is an {\em $X$-matching} of $G$ if every vertex in $X$ is incident with an edge of $M$. 
\begin{thm}
\label{thm:hall}
{\rm \cite{Hall35}}
A bipartite graph $G$ with $V(G)=[X,Y]$ has an $X$-matching if and only if for every subset $W\subset X$ we have $|N(W)|\ge |W|$.
\end{thm}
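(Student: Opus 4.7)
The plan is to prove the nontrivial direction by induction on $|X|$, as the converse is immediate: if $M$ is an $X$-matching then for any $W\subseteq X$ its restriction to $W$ pairs those vertices with $|W|$ distinct vertices of $N(W)$, so $|N(W)|\ge |W|$. For sufficiency, the base $|X|=1$ is trivial since Hall's condition supplies a neighbor of the unique $x\in X$ with which to match it. In the inductive step, I would distinguish two cases according to whether Hall's condition is ever tight on a proper nonempty subset of $X$.

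In the \emph{slack everywhere} case, where $|N(W)|>|W|$ for every nonempty proper $W\subsetneq X$, I would pick any $x\in X$, any $y\in N(x)$, delete both, and verify that Hall's condition persists on $X'=X\setminus\{x\}$ in $G'=G-\{x,y\}$: for nonempty $W'\subseteq X'$ we have $|N_{G'}(W')|\ge |N_G(W')|-1\ge (|W'|+1)-1=|W'|$. The inductive hypothesis yields an $X'$-matching, and appending the edge $xy$ completes the $X$-matching.

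In the \emph{tight subset} case, where some nonempty $W\subsetneq X$ satisfies $|N(W)|=|W|$, I would apply induction twice. First, the subgraph $G[W\cup N(W)]$ inherits Hall's condition for $W$ since the neighborhoods of subsets of $W$ are unchanged, giving a $W$-matching $M_1$. Second, in $G_2=G-(W\cup N(W))$ with left part $X\setminus W$, for any $W'\subseteq X\setminus W$ the disjoint decomposition $N_G(W\cup W')=N(W)\sqcup N_{G_2}(W')$ gives $|N_{G_2}(W')|=|N_G(W\cup W')|-|N(W)|\ge |W\cup W'|-|W|=|W'|$; induction yields an $(X\setminus W)$-matching $M_2$, and $M_1\cup M_2$ is the required $X$-matching.

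I expect the main technical point to be the cardinality bookkeeping in the tight case, specifically verifying that $N_G(W\cup W')$ splits disjointly as $N(W)\sqcup N_{G_2}(W')$: this holds because $N_{G_2}(W')$ is by construction $N_G(W')\setminus N(W)$, while every vertex of $N_G(W\cup W')$ lies in either $N(W)$ or, if not, in $N_{G_2}(W')$. Once this decomposition is established, both case analyses reduce to straightforward applications of the inductive hypothesis.
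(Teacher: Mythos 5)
Your proof is correct: it is the classical Halmos--Vaughan argument for Hall's theorem, with both cases (slack everywhere versus a tight subset) handled properly, and the key bookkeeping step --- the disjoint decomposition $N_G(W\cup W')=N(W)\sqcup N_{G_2}(W')$ in the tight case --- is verified accurately. Note, however, that the paper does not prove this statement at all; it is quoted as a known classical result with a citation to Hall's 1935 paper, so there is no in-paper proof to compare against. Your write-up would serve as a complete, self-contained substitute for that citation if one were wanted.
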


Perhaps the most interesting class of Kneser graphs is that of {\em odd graphs}, $O_k=K_{2k+1,k}$. The dissociation number of odd graph $O_k$ is by Proposition~\ref{prp:lowerBound} bounded below  by ${2k \choose k}$. Proposition~\ref{prp:Kn2} implies that this bound is also an upper bound for $k=2$. In the next result we prove that the bound is the exact value also for $k > 2$.

\begin{thm}\label{thm:odd}

For any $k \geq 2$, $diss(O_k)={2k \choose k}$.
\end{thm}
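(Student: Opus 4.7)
The plan is to sandwich $diss(O_k)$ between matching bounds of $\binom{2k}{k}$. The lower bound $diss(O_k)\ge\binom{2k}{k}$ is immediate from Proposition~\ref{prp:lowerBound}, since $O_k=K_{2k+1,k}$ contains an induced $K_{2k,k}$ and the latter is itself a perfect matching on $\binom{2k}{k}$ vertices, hence already a dissociation set.

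For the upper bound I plan a double-counting of the edges between an arbitrary dissociation set $S$ and its complement $T=V(O_k)\setminus S$, using only the $(k+1)$-regularity of $O_k$. Decompose $S=A\sqcup V(M)$, where $M$ consists of the internal edges of $O_k[S]$ (a matching, since $O_k[S]$ has maximum degree $1$) and $A$ is the set of isolated vertices of $O_k[S]$; so $|S|=|A|+2|M|$. From the $S$-side, each $v\in A$ sends all $k+1$ of its edges into $T$, while each $v\in V(M)$ sends exactly $k$ edges into $T$ (its only $S$-neighbor being its $M$-partner), yielding $|E(S,T)|=(k+1)|A|+2k|M|$. From the $T$-side, each $u\in T$ has at most $k+1$ neighbors in $S$, so $|E(S,T)|\le(k+1)|T|$.

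Combining the two estimates with $|T|=\binom{2k+1}{k}-|A|-2|M|$ and the identity $(k+1)\binom{2k+1}{k}=(2k+1)\binom{2k}{k}$, elementary algebra gives
$$|S|\ \le\ \binom{2k}{k}-\frac{|A|}{2k+1}\ \le\ \binom{2k}{k}.$$
The main obstacle is essentially just the algebraic cleanup after setting up the two counts. I would finish by reading off the equality case: it forces $|A|=0$ and every $u\in T$ to have all of its $k+1$ neighbors in $S$, so $T$ is an independent set of size $\binom{2k}{k-1}=\alpha(O_k)$, hence a center $I(i)$ by Erd\H{o}s--Ko--Rado, which means $S=V(O_k)\setminus I(i)$ and matches the lower-bound construction. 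This route does not actually require Hall's theorem, although a natural alternative would attempt to apply Hall's condition to the bipartite graph between $I(i)\cap S$ and $T\setminus I(i)$ in order to inject the former into the latter and conclude $|T|\ge\binom{2k}{k-1}$ directly.
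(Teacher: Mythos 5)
Your argument is correct, and it is genuinely different from (and considerably shorter than) the proof in the paper. The paper splits $V(O_k)$ into the center $U=I(2k+1)$ and its complement $D$ (which induces a perfect matching), and then, in the case where a dissociation set $S$ meets both parts, invokes Hall's theorem to produce an $L$-matching from $L=U\cap S$ into $N(L)\subseteq D$, followed by a careful count of edges into the unmatched part of $D$. Your proof bypasses all of this: it is a single global double count of $E(S,T)$ using only the $(k+1)$-regularity of $O_k$. Writing $d=k+1$ and $n=\binom{2k+1}{k}$, your two estimates combine to the general regular-graph bound $|S|\le \frac{dn}{2d-1}-\frac{|A|}{2d-1}$, and the identity $(k+1)\binom{2k+1}{k}=(2k+1)\binom{2k}{k}$ makes $\frac{dn}{2d-1}$ collapse exactly to $\binom{2k}{k}$; I verified the edge counts ($(k+1)|A|+2k|M|$ from the $S$-side, at most $(k+1)|T|$ from the $T$-side) and the algebra. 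What your approach buys is brevity, the avoidance of Hall's theorem entirely, and the stronger conclusion that every maximum dissociation set of $O_k$ induces a perfect matching ($|A|=0$); your equality analysis identifying $T$ as a center additionally uses the uniqueness part of Erd\H{o}s--Ko--Rado (valid since $2k+1>2k$), though none of that is needed for the theorem itself. What the paper's approach buys is a structural description of how $S$ interacts with the center/non-center decomposition, but as a proof of the stated equality your route is cleaner.
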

\begin{proof}
By Proposition~\ref{prp:lowerBound}, $diss(O_k) \geq {2k \choose k}$.

Let $S$ be a maximum dissociation set of $O_k$. Let $D=\{x \in V(O_k); 2k+1 \notin x\}$ and $U=V(O_k) \setminus D$, that is, $U=\{x \in V(O_k); 2k+1 \in x\}$. Note that $O_k[D]$ is isomorphic to $K_{2k,k}$ which is in turn isomorphic to $\frac{1}{2}{2k \choose k} K_2$. On the other hand, $U$ is a center $I(2k+1)$ of $K_{2k+1,k}$, hence an independent set.

If $S \cap D = \emptyset$, then $|S| \leq |U|={2k \choose k-1} \leq {2k \choose k}$ and the proof is done. If $S \cap U = \emptyset$, then $|S| \leq |D|={2k \choose k}$ which also completes the proof. Thus it remains to consider the case when $D \cap S \neq \emptyset$ and $L=U \cap S \neq \emptyset$. Set $\ell=|L|$. Since $|D|={2k \choose k}$ and $|S \cap U|=\ell$, it suffices to prove that at least $\ell$ vertices from $D$ are not contained in $S$.

Let $E$ be the set of edges having one endvertex in $L$ and the other endvertex in $N_{O_k}(L)$, where $N_{O_k}(L)$ is a subset of $D$. Now, $|E|=(k+1)|L|$, since any $u \in L$ has exactly $k+1$ neighbors in $D$. On the other hand, any $x \in N_{O_k}(L)$ has exactly $k$ neighbors in $U$ and hence $|E| \leq k |N_{O_k}(L)|$. Therefore $|N_{O_k}(L)| \geq \frac{k+1}{k}|L| > |L|$. The same argument applies for any subset of $L$; that is, if $L'\subset L$, then $|N_{O_k}(L')| \geq \frac{k+1}{k}|L'| > |L'|$. Thus, by Theorem~\ref{thm:hall}, there is an $L$-matching $M=\{u_1x_1,u_2x_2,\ldots , u_{\ell}x_{\ell}\}$ in a bipartite graph $G=(L \cup N_{O_k}(L),E)$, where $u_i \in L$ and $x_i \in N_{O_k}(L)$ for any $i \in [\ell]$. Let $[M',M'']$ be a partition of $M$, where $M'$ is the set of edges in $M$ with both endvertices contained in $S$. Since $L \subseteq S$, exactly one endvertex of each edge in $M''$ (that is, the endvertex from $L$) is contained in $S$. Denote by $Z$ (resp.\ $A'$) the set of endvertices in $L$ (resp.\ $N_{O_k}(L)$) of edges in $M'$ and let $W$ (resp.\ $B'$) be the set of endvertices of edges in $M''$ that are contained in $L$ (resp.\ $N_{O_k}(L)$). Furthermore, let $A''=(N_{O_k}(L) \cap S) \setminus A'$ and $B''=(N_{O_k}(L) \setminus S) \setminus B'$. This definitions directly imply that $[A',A'',B',B'']$ is a partition of $N_{O_k}(L)$. Note that $A' \cup A''$ consists exactly of the vertices of $N_{O_k}(L)$ that are contained in $S$, and $B' \cup B''$ contains the vertices from $N_{O_k}(L)$ not contained in $S$.

To complete the proof, we count the number of edges between $L$ and $B' \cup B''$. Denote the set of those edges by $E'$. Since $Z \cup A' \cup A''$ is a subset of a dissociation set $S$ and $u_ix_i \in M$ is an edge in $O_k[S]$, $x_i$ is the only neighbor of $u_i \in Z$ that is contained in $S$. Hence all other $k$ neighbors of $u_i$ in $D$ are from $B' \cup B''$. Since $W \cup A' \cup A'' \subseteq S$, any vertex $u_i \in W$ has at most one neighbor in $A''$ and all other $k$ neighbors of $u_i$ in $D$ are from $B' \cup B''$. Thus $|E'| \geq k (|Z|+|W|)=k|L|$. Since each vertex $x \in B' \cup B''$ has exactly $k$ neighbors in $U$ and as $L$ is a subset of $U$, we get $|E'| \leq k(|B'|+|B''|)$. Consequently $|B'|+|B''| \geq |L|$.  Hence $B' \cup B'' \subseteq D \setminus S$ is a set of at least $\ell$ vertices in $D$ that are not contained in $S.$
\qed        
\end{proof}

\section{Concluding remarks}
 
In this paper, we found the dissociation number of several families of Kneser graphs $K_{n,k}$. This includes the cases $k\in \{2,3\}$ and $n=2k+1$. As proved in Theorem~\ref{thm:large-n}, when $n$ is large enough with respect to $k$, then the dissociation number equals the independence number of the corresponding Kneser graph. The point when this happens for a given $k$, the integer $n_0$, is open, and we give a lower bound for it. Two problems posed in Section~\ref{sec:dissociation} are related to $n_0$. Establishing exact values of $diss(K_{2k+r,k})$, where $r$ is a small integer greater than $1$, is another challenge.  

The dissociation number is dual invariant to the $3$-path vertex cover number (as is the independence number dual to the ($2$-path) vertex cover number). A natural problem is to consider the $m$-path vertex cover number of Kneser graphs, for any given $m>3$. An alternative extension of the problem studied in this paper is the following. Since a dissociation set induces a subgraph with maximum degree at most $1$, it would be interesting to find the largest size of a subset of vertices in $K_{n,k}$ that induces a subgraph with maximum degree $\Delta$, for any given $\Delta\ge 2$. 

\section*{Acknowledgements}
B.B. and T.D. acknowledge the financial support from the Slovenian Research Agency (research core funding No.\ P1-0297 and project grants No.\ J$1$-$9109$ and N$1$-$0095$). 


\end{document}